\documentclass[12pt]{amsart}
\usepackage{amscd,verbatim}
\usepackage{amssymb}
\usepackage[all]{xy}

\newcommand{\A}{\mathbf{A}}

\newcommand{\G}{\mathbf{G}}
\renewcommand{\L}{\mathbb{L}}
\renewcommand{\P}{\mathbf{P}}

\newcommand{\Z}{\mathbf{Z}}
\newcommand{\sA}{\mathcal{A}}
\newcommand{\sB}{\mathcal{B}}
\newcommand{\sF}{\mathcal{F}}
\newcommand{\sG}{\mathcal{G}}
\newcommand{\sH}{\mathcal{H}}
\newcommand{\sO}{\mathcal{O}}
\newcommand{\bH}{\mathbb{H}}
\newcommand{\Mod}{\hbox{--}\operatorname{Mod}}
\newcommand{\Span}{\operatorname{\mathbf{Span}}}
\newcommand{\Cor}{\operatorname{\mathbf{Cor}}}
\newcommand{\Mack}{\operatorname{\mathbf{Mack}}}
\newcommand{\HI}{\operatorname{\mathbf{HI}}}
\newcommand{\PST}{\operatorname{\mathbf{PST}}}
\newcommand{\NST}{\operatorname{\mathbf{NST}}}
\newcommand{\DM}{\operatorname{\mathbf{DM}}}
\newcommand{\Hom}{\operatorname{Hom}}
\newcommand{\uHom}{\operatorname{\underline{Hom}}}

\newcommand{\IM}{\operatorname{Im}}
\newcommand{\Coker}{\operatorname{Coker}}
\newcommand{\Tr}{\operatorname{Tr}}

\newcommand{\Spec}{\operatorname{Spec}}
\newcommand{\oo}{\operatornamewithlimits{\otimes}\limits}
\newcommand{\by}[1]{\overset{#1}{\longrightarrow}}
\newcommand{\yb}[1]{\overset{#1}{\longleftarrow}}
\newcommand{\iso}{\by{\sim}}
\newcommand{\osi}{\yb{\sim}}
\newcommand{\eff}{{\operatorname{eff}}}
\newcommand{\Zar}{{\operatorname{Zar}}}
\newcommand{\Nis}{{\operatorname{Nis}}}
\newcommand{\surj}{\rightarrow\!\!\!\!\!\rightarrow}
\newcommand{\Surj}{\relbar\joinrel\surj}

\renewcommand{\phi}{\varphi}
\renewcommand{\epsilon}{\varepsilon}

\newcounter{spec}
{\end{list}}%

\swapnumbers
\newtheorem{lemma}{Lemma}[section]
\newtheorem{thm}[lemma]{Theorem}
\newtheorem{prop}[lemma]{Proposition}
\newtheorem{cor}[lemma]{Corollary}
\theoremstyle{definition}

\newtheorem{nota}[lemma]{Notation}
\newtheorem{para}[lemma]{}
\theoremstyle{remark}
\newtheorem{rk}[lemma]{Remark}

\numberwithin{equation}{section}

\begin{document}
\title[Somekawa's $K$-groups and Voevodsky's Hom groups]{Somekawa's $K$-groups and Voevodsky's
Hom groups (preliminary version)}
\author{Bruno Kahn}
\address{Institut de Math\'ematiques de Jussieu\\175--179, rue du Chevaleret\\75013 Paris\\France}
\email{kahn@math.jussieu.fr}
\date{September 22, 2010}
\begin{abstract}
We construct a surjective homomorphism from So\-me\-ka\-wa's $K$-group associated to a finite
collection of semi-abelian varieties over a perfect field to a corresponding Hom group in
Voevodsky's triangulated category of effective motivic complexes.
\end{abstract}
\maketitle

\tableofcontents

\section{Introduction}
In this note, we construct an epimorphism
\begin{equation}\label{eq1}
K(k;G_1,\dots,G_n)\Surj \Hom_{\DM_-^\eff}(\Z,G_1[0]\otimes\dots\otimes G_n[0])
\end{equation}
where $k$ is a perfect field, $G_1,\dots,G_n$ are semi-abelian $k$-varieties, the
left-hand-side is the abelian group defined by K. Kato and studied by M. Somekawa in
\cite{somekawa} and on the right hand side, the tensor product $G_1[0]\otimes\dots\otimes
G_n[0]$ is computed in Voevodsky's triangulated category of effective motivic complexes
\cite{voetri} or alternately in his category of homotopy invariant Nisnevich sheaves with
transfers (ibid.). This has been announced in \cite[Rk 10 (b)]{sp-ya} and is used in
\cite[Th. 3.9]{yamazaki}.

I expect \eqref{eq1} to be bijective. This would provide an affirmative answer to a
version of Somekawa's expectation in the introduction of his paper (probably the closest
answer, as long as one does not have an abelian category of mixed motives at hand). The
method I have in mind to prove injectivity involves defining a group
\[K_{-1}(k;G_1,\dots,G_n)\]
modelled on Voevodsky's construction $(-)_{-1}$ on sheaves. However the construction of this
group appears more subtle than I initially thought, so I felt it might be useful to
already release this much of the story.

Recall that $K(k;G_1,\dots,G_n)$ is defined as a quotient of a larger group
\[(G_1\oo^M\dots \oo^M G_n)(k)\]
where $\oo^M$ is computed in the category of cohomological Mackey functors \cite{knote}. Our
strategy will be as follows:

\begin{enumerate}
\item Construct a surjective homomorphism 
\begin{equation}\label{eq2}
(G_1\oo^M\dots \oo^M G_n)(k)\to \Hom_{\DM_-^\eff}(\Z,G_1[0]\otimes\dots\otimes G_n[0]).
\end{equation}
This is achieved in \S\S \ref{mor1} and \ref{mor2}.
\item Show that \eqref{eq2} factors through the Kato-Somekawa re\-la\-tions, yielding
\eqref{eq1}. This is achieved in Theorem \ref{p1}.
\end{enumerate}

I wish to thank Takao Yamazaki for his interest and encouragement to pursue this work.

\section{Mackey functors and presheaves with transfers}

\begin{para}A \emph{Mackey functor} over $k$ is a contravariant additive (i.e., commuting with
coproducts) functor $A$ from the category of \'etale $k$-schemes to the category of abelian
groups, provided with a covariant structure verifying the following exchange condition: if 
\[\begin{CD}
Y'@>f'>> Y\\
@Vg'VV @VgVV\\
X'@>f>> X
\end{CD}
\]
is a cartesian square of \'etale $k$-schemes, then the diagram
\[\begin{CD}
A(Y')@>{f'}^*>> A(Y)\\
@Vg'_*VV @Vg_*VV\\
A(X')@>f^*>> A(X)
\end{CD}
\]
commutes. Here, $^*$ denotes the contravariant structure while $_*$ denotes the covariant
structure. The Mackey functor $A$ is \emph{cohomological} if we further have
\[f_* f^* = \deg(f)\]
for any $f:X'\to X$, with $X$ connected. We denote by $\Mack$ the abelian category of Mackey
functors, and by $\Mack_c$ its full subcategory of cohomological Mackey functors.
\end{para}

\begin{para}Classically \cite[(1.4)]{thevenaz}, a Mackey functor may be viewed as a
contravariant additive functor on the category $\Span$ of ``spans" on \'etale $k$-schemes,
defined as follows: objects are \'etale $k$-schemes. A morphism from $X$ to $Y$ is an
equivalence class of diagram (span)
\begin{equation}\label{eq3}
X\yb{g} Z\by{f} Y.
\end{equation}

Composition of spans is defined via fibre product in an obvious manner. If $A$ is a Mackey
functor, the corresponding functor on $\Span$ has the same value on objects, while its value on
a span \eqref{eq3} is given by $g_*f^*$.

Note that $\Span$ is a preadditive category: one may add (but not substract) two morphisms
with same source and target. We may as well view a Mackey functor as an additive functor on the
associated additive category $\Z\Span$.
\end{para}

\begin{para}Let $\Cor$ be Voevodsky's category of finite correspondences on smooth
$k$-schemes, denoted by $SmCor(k)$ in \cite[\S 2.1]{voetri}. The category $\Z\Span$ is
isomorphic to its full subcategory consisting of smooth $k$-schemes of dimension $0$
(= \'etale $k$-schemes). In particular, any presheaf with transfers in the sense of Voevodsky
\cite[Def. 3.1.1]{voetri} restricts to a Mackey functor over $k$. By \cite[Cor. 3.15]{voepre},
the restriction of a \emph{homotopy invariant} presheaf with transfers yields a cohomological
Mackey functor. In other words, we have exact functors
\begin{align}
\rho:\PST&\to \Mack\label{eq4}\\
\rho:\HI&\to \Mack_c\label{eq5}
\end{align}
where $\PST$ denotes the category of presheaves with transfers (contravariant additive
functors from $\Cor$ to abelian groups) and $\HI$ is its full subcategory consisting of
homotopy invariant presheaves with transfers.
\end{para}

\begin{para} There is a tensor product of Mackey functors $\oo^M$, originally defined by L. G.
Lewis (unpublished): it extends naturally the symmetric
monoidal structure $(X,Y)\mapsto X\times_K Y$ on $\Z\Span$ via the additive Yoneda embedding
(see \S \ref{Atens}). If either $A$ or $B$ is cohomological, $A\oo^M B$ is cohomological. 

This tensor product is the same as the one defined in \cite{knote}: this follows from
\eqref{eqA.2} and the fact that $\Z\Span$ is rigid, all objects being self-dual
(indeed, $\Z\Span$ is canonically isomorphic to the category of Artin Chow motives with
integral coefficients).
\end{para}

\begin{para}
There is a tensor product on presheaves with transfers defined exactly in the same way \cite[p.
236]{voetri}.
\end{para}

\begin{para} By definition, the functor \eqref{eq4} equals $i^*$, where $i$ is the inclusion
$\Z\Span\to \Cor$. This inclusion has a left adjoint $\pi_0$ (scheme of constants). Both
functors
$i$ and
$\pi_0$ are symmetric monoidal: for $\pi_0$, reduce to the case where $k$ is separably closed.
\end{para}

\begin{para}\label{s2.5}  By \S \S \ref{Aadj} and \ref{Acoh}, this implies that \eqref{eq4} is
symmetric monoidal. In other words, if $\sF$ and $\sG$ are presheaves with transfers, then
\begin{equation}\label{eq6}
\rho\sF\oo^M\rho\sG\simeq \rho(\sF\otimes_{\PST}\sG).
\end{equation}
\end{para}

\begin{para}\label{s2.6} The inclusion functor $\HI\to \PST$ has a left adjoint $h_0$, and the
symmetric  monoidal structure of $\PST$ induces one on $\HI$ via $h_0$. In other words, if
$\sF,\sG\in \HI$, we define
\begin{equation}\label{eq8}
\sF\otimes_{\HI}\sG = h_0(\sF\otimes_{\PST} \sG).
\end{equation}

Note that \eqref{eq5} is \emph{not} symmetric monoidal (since it is the restriction of
\eqref{eq4}).
\end{para}

\begin{para}\label{ssurj} For any $\sF\in \PST$, the unit morphism $\sF\to h_0(\sF)$ induces a
surjection
\[\sF(k)\to h_0(\sF)(k).\]

This is obvious from the formula $h_0(\sF)=\Coker(C_1(\sF)\to \sF)$.
\end{para}

\begin{para}\label{s2.8} We shall also need to work with Nisnevich sheaves with transfers. We
denote by $\NST$ the category of Nisnevich sheaves with transfers (objects of $\PST$ which are
sheaves in the Nisnevich topology). By \cite[Th. 3.1.4]{voetri}, the inclusion functor $\NST\to
\PST$ has an exact left adjoint $\sF\mapsto \sF_\Nis$ (sheafification). The category $\NST$
then inherits a tensor product by the formula
\[\sF\otimes_{\NST} \sG=(\sF\otimes_{\PST}\sG)_\Nis.\]

Similarly, we define $\HI_\Nis=\HI\cap \NST$. The sheafification functor restricts to an exact
functor $\HI\to \HI_\Nis$ \cite[Th. 3.1.11]{voetri}, and $\HI_\Nis$ gets a tensor product by the
formula
\[\sF\otimes_{\HI_\Nis} \sG=(\sF\otimes_{\HI}\sG)_\Nis.\]

To summarise, all functors in the following commutative diagram are symmetric monoidal:
\begin{equation}\label{eq15}
\begin{CD}
\PST@>\Nis>> \NST\\
@V{h_0}VV @V{h_0^\Nis}VV\\
\HI@>\Nis>> \HI_\Nis.
\end{CD}
\end{equation}
where each functor is left adjoint to the corresponding inclusion.
\end{para}

\begin{para}\label{s3.3} Let $\sF$ be a presheaf on $Sm/k$, and let $\sF_\Nis$ be the
associated Nisnevich sheaf. Then we have an isomorphism
\begin{equation}\label{eq11}
\sF(k)\iso \sF_\Nis(k).
\end{equation}

Indeed, any covering of the Nisnevich topology on $\Spec k$ refines to a trivial covering. In
particular, the functor $\sF\mapsto \sF_\Nis(k)$ is exact.

This applies in particular to a presheaf with transfers and the associated Nisnevich sheaf with
transfers.
\end{para}

\begin{para}\label{mor1} If $G$ is an abelian $k$-group scheme whose identity component is a
quasi-projective variety, then $G$ has a canonical structure of Nisnevich sheaf with transfers
(\cite[proof of Lemma 3.2]{spsz} completed by \cite[Lemma 1.3.2]{bar-kahn}). This applies in
particular to semi-abelian varieties. In particular, if $G_1,\dots,G_n$ are semi-abelian
varieties, \eqref{eq6} yields a canonical isomorphism
\begin{equation}\label{eq7}
(G_1\oo^M\dots \oo^MG_n)(k)\simeq (G_1\otimes_{\PST}\dots \otimes_{\PST}G_n)(k)
\end{equation}
where the $G_i$ are considered on the left as Mackey functors, and on the right as presheaves
with transfers. 

Since the $G_i$ are semi-abelian varieties, they are homotopy invariant. Therefore, composing
\eqref{eq7} with the unit morphism $Id\Rightarrow h_0^\Nis$ from \eqref{eq15} and taking
\eqref{eq8} into account, we get a canonical morphism
\begin{equation}\label{eq9}
(G_1\oo^M\dots \oo^MG_n)(k)\to (G_1\otimes_{\HI_\Nis}\dots \otimes_{\HI_\Nis}G_n)(k).
\end{equation}
which is surjective by \S \ref{ssurj}.
\end{para}

\section{Preseheaves with transfers and motives}

\begin{para} The left adjoint $h_0^\Nis$ in \eqref{eq15} ``extends" to a left adjoint $C_*$ of
the inclusion
\[\DM_-^\eff\to D^-(\NST)\]
where the left hand side is Voevodsky's triangulated category of effective motivic complexes
\cite[\S 3, esp. Prop. 3.2.3]{voetri}. 

More precisely, $\DM_-^\eff$ is defined as the full subcategory of objects of $D^-(\NST)$
whose cohomology sheaves are homotopy invariant. The canonical $t$-structure of $D^-(\NST)$
induces a $t$-structure on $\DM_-^\eff$, with heart $\HI_\Nis$. The functor $C_*$ is right
exact with respect to these $t$-structures, and if $\sF\in \NST$, then
$H_0(C_*(\sF))=h_0^\Nis(\sF)$.
\end{para}

\begin{para} The tensor structure of \S \ref{s2.8} on $\NST$ extends to one on $D^-(\NST)$
\cite[p. 206]{voetri}. Via
$C_*$, this tensor structure descends to a tensor structure on $\DM_-^\eff$ \cite[p.
210]{voetri}, which will simply be denoted by
$\otimes$. The relationship between this tensor structure and the one of \S \ref{s2.8} is as
follows: is $\sF,\sG\in \HI_\Nis$, then
\begin{equation}\label{eq10}
\sF\otimes_{\HI_\Nis}\sG = H^0(\sF[0]\otimes \sG[0])
\end{equation}
where $\sF[0],\sG[0]$ are viewed as complexes of Nisnevich sheaves with transfers concentrated
in degree $0$.

We shall need the following lemma, which is not explicit in \cite{voetri}:
\end{para}

\begin{lemma}\label{l2} The tensor product $\otimes$ of $\DM_-^\eff$ is right exact with respect to the homotopy $t$-structure.
\end{lemma}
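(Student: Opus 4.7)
The statement asserts that $A\otimes B\in \DM_-^{\eff,\leq 0}$ whenever $A,B\in \DM_-^{\eff,\leq 0}$, where ``$\leq 0$'' refers to the homotopy $t$-structure (with heart $\HI_\Nis$).

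My plan is to factor $\otimes$ on $\DM_-^\eff$ as a composition of right $t$-exact functors. Recall from the preceding paragraph and \cite[p.~210]{voetri} that $A\otimes B = C_*(A\otimes^L_\NST B)$, where $A,B$ are regarded as objects of $D^-(\NST)$ via the fully faithful inclusion $\DM_-^\eff\hookrightarrow D^-(\NST)$. We have just observed that $C_*$ is right exact with respect to the $t$-structures, so $C_*$ sends $D^{\leq 0}(\NST)$ into $\DM_-^{\eff,\leq 0}$. It therefore suffices to show that $\otimes^L_\NST$ is right $t$-exact, i.e.\ sends $D^{\leq 0}(\NST)\times D^{\leq 0}(\NST)$ into $D^{\leq 0}(\NST)$.

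For this, I would descend one further level, to $\PST$. Since Nisnevich sheafification is exact and $\otimes_\NST$ is obtained from $\otimes_\PST$ by sheafifying (\S \ref{s2.8}), the question reduces to showing that $\otimes^L_\PST$ is right $t$-exact. Here the argument is direct: the representables $L(X)$ for $X\in Sm/k$ are projective objects of $\PST$ by the Yoneda formula $\Hom_\PST(L(X),\sF)=\sF(X)$, and every object of $\PST$ is a quotient of a direct sum of such. So every $\sF\in \PST$ admits a projective resolution by sums of representables, concentrated in nonpositive degrees. Since $L(X)\otimes_\PST L(Y) = L(X\times Y)$ by definition of the tensor product, tensoring such a resolution of $\sF$ with $\sG$ produces a complex in nonpositive degrees, whence $\sF\otimes^L_\PST \sG\in D^{\leq 0}(\PST)$. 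The general case of $A,B\in D^{\leq 0}(\PST)$ follows by the standard total-complex argument applied to a pair of such resolutions.

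The main obstacle is conceptual rather than computational: the tensor on $\DM_-^\eff$ sits two layers away from $\otimes_\PST$, through sheafification and through $C_*$, so a clean proof requires unwinding these layers and checking that the right exactness at each layer is the correct notion to compose. Once that bookkeeping is done, the conclusion is that three right $t$-exact functors (derived tensor on $\PST$, Nisnevich sheafification, and $C_*$) are composed, yielding the claim.
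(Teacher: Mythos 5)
Your proof is correct and follows essentially the same route as the paper's: both reduce, via the canonical left resolutions by direct sums of representables and the formula $L(X)\otimes L(Y)=L(X\times Y)$, to the observation that $C_*(L(X\times Y))$ is concentrated in nonpositive degrees, using the right exactness of $C_*$. Your version merely makes the factorisation into right $t$-exact layers (derived tensor, sheafification, $C_*$) more explicit than the paper does.
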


\begin{proof} By definition,
\[C\otimes D = C_*(C\oo^L D)\]
for $C,D\in \DM_-^\eff$, where $\oo^L$ is the tensor product of $D^-(\NST)$ defined in
\cite[p. 206]{voetri}. We want to show that, if $C$ and $D$ are concentrated in degrees $\le
0$, then so is $C\otimes D$. Using the canonical left resolutions of loc. cit., it is enough to
do it for $C$ and $D$ of the form $C_*(L(X))$ and $C_*(L(Y))$ for two smooth schemes $X,Y$.
Since $C_*$ is symmetric monoidal, we have
\[C_*(L(X))\otimes C_*(L(Y))\osi C_*(L(X)\oo^L L(Y)) = C_*(L(X\times Y))\]
and the claim is obvious in view of the formula for $C_*$ \cite[p. 207]{voetri}.
\end{proof}

\begin{para} Let $C\in \DM_-^\eff$. For any $X\in Sm/k$ and any $i\in\Z$, we have
\[\bH^i_\Nis(X,C)\simeq \Hom_{\DM_-^\eff}(M(X),C[i])\]
where $M(X)=C_*(L(X))$ is the motive of $X$ computed in $\DM_-^\eff$ (cf. \cite[Prop.
3.2.7]{voetri}).

Specialising to the case $X=\Spec k$ ($M(X)=\Z$) and taking \S \ref{s3.3} into account, we get
\begin{equation}\label{eq12}
\Hom_{\DM_-^\eff}(\Z,C[i])\simeq H^i(C)(k).
\end{equation}

Combining \eqref{eq10}, \eqref{eq11} and \eqref{eq12}, we get:
\end{para}

\begin{lemma} \label{l1} Let $\sF_1,\dots,\sF_n$ be homotopy invariant Nisnevich sheaves with
transfers. Then we have a canonical isomorphism
\[(\sF_1\otimes_{\HI_\Nis}\dots \otimes_{\HI_\Nis}\sF_n)(k) \simeq
\Hom_{\DM_-^\eff}(\Z,\sF_1[0]\otimes\dots \otimes\sF_n[0]).\]
\qed
\end{lemma}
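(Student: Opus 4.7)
The plan is to combine \eqref{eq10}, \eqref{eq11} and \eqref{eq12} exactly as the parenthetical remark preceding the lemma suggests; the only real task is to promote \eqref{eq10} from two factors to $n$ factors. Concretely, I will establish the intermediate identification
\[\sF_1\otimes_{\HI_\Nis}\cdots\otimes_{\HI_\Nis}\sF_n \simeq H^0\bigl(\sF_1[0]\otimes\cdots\otimes\sF_n[0]\bigr),\]
the right-hand tensor product being computed in $\DM_-^\eff$. Once this is in hand, specialising \eqref{eq12} to $i=0$ and $C=\sF_1[0]\otimes\cdots\otimes\sF_n[0]$ yields
\[\Hom_{\DM_-^\eff}(\Z,\sF_1[0]\otimes\cdots\otimes\sF_n[0])\simeq H^0(\sF_1[0]\otimes\cdots\otimes\sF_n[0])(k),\]
and combining the two isomorphisms finishes the argument; the isomorphism \eqref{eq11} enters implicitly when identifying $\bH^0_\Nis(\Spec k,-)$ with sections at $k$, since Nisnevich sheafification is a bijection on $k$-points.

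I would prove the displayed intermediate identity by induction on $n$, with base case $n=2$ being precisely \eqref{eq10}. For the inductive step, set $A=\sF_1[0]\otimes\cdots\otimes\sF_{n-1}[0]$. Iterating Lemma \ref{l2} places $A$ in $D^{\leq 0}$, so the canonical distinguished triangle
\[\tau_{\leq -1}A \to A \to H^0(A)[0] \to \tau_{\leq -1}A[1]\]
is meaningful in $\DM_-^\eff$, and the inductive hypothesis identifies $H^0(A)$ with $\sF_1\otimes_{\HI_\Nis}\cdots\otimes_{\HI_\Nis}\sF_{n-1}$. Tensoring the triangle with $\sF_n[0]$ and invoking Lemma \ref{l2} once more, the leftmost term $\tau_{\leq -1}A\otimes\sF_n[0]$ stays in $D^{\leq -1}$, so that passing to $H^0$ of the tensored triangle gives an isomorphism
\[H^0(A\otimes\sF_n[0]) \simeq H^0\bigl(H^0(A)[0]\otimes\sF_n[0]\bigr).\]
A last application of \eqref{eq10} rewrites the right-hand side as $H^0(A)\otimes_{\HI_\Nis}\sF_n$, and expanding $H^0(A)$ via the inductive hypothesis closes the induction.

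The only step that really needs checking, and thus the main obstacle, is that tensoring with an object in degree zero carries $D^{\leq -1}$ into $D^{\leq -1}$: this is a mild strengthening of the right exactness recorded in Lemma \ref{l2}, but it follows from it immediately by writing $\tau_{\leq -1}A=B[1]$ with $B\in D^{\leq 0}$ and applying Lemma \ref{l2} to $B\otimes\sF_n[0]$. Everything else is formal bookkeeping between the cited formulas.
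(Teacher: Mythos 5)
Your proposal is correct and follows exactly the route the paper takes: the paper's entire proof is the one-line remark ``Combining \eqref{eq10}, \eqref{eq11} and \eqref{eq12}, we get'' the lemma, and your induction using the truncation triangle together with the shifted form of Lemma \ref{l2} is precisely the bookkeeping needed to promote \eqref{eq10} from two factors to $n$. No gaps.
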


\begin{para}\label{mor2} Combining Lemma \ref{l1} with  \eqref{eq9}, we get the announced
homomorphism
\eqref{eq2}. However, we shall mainly work with presheaves with transfers in the sequel, hence
use \eqref{eq9} rather than \eqref{eq2}.
\end{para}

\section{Presheaves with transfers and local symbols}

\begin{para}\label{s4.2} Given a presheaf with transfers $\sG$, recall from \cite[p.
96]{voepre} the presheaf with transfers $\sG_{-1}$ defined by the formula
\[\sG_{-1}(U) = \Coker\left(\sG(U\times \A^1)\to \sG(U\times (\A^1-\{0\}))\right).\]

Suppose that $\sG$ is homotopy invariant. Let $X\in Sm/k$ (connected),  $K=k(X)$ and $x\in X$ be
a point of codimension $1$. By \cite[Lemma 4.36]{voepre}, there is a canonical isomorphism
\begin{equation}\label{eq14a}
\sG_{-1}(k(x))\simeq H^1_x(X,\sG_\Zar)
\end{equation}
yielding a
canonical map
\begin{equation}\label{eq14}
\partial_x:\sG(K)\to \sG_{-1}(k(x)).
\end{equation}
\end{para}

The following lemma follows from the construction of the isomorphisms \eqref{eq14a}. It is part
of the general fact that $\sG$ defines a cycle module in the sense of Rost (cf. \cite[Prop.
5.4.64]{deglise}).

\begin{lemma}\label{l4} a) Let $f:Y\to X$ be a dominant morphism, with
$Y$ smooth and connected. Let $L=k(Y)$, and let $y\in Y^{(1)}$ be such that $f(y)=x$. Then the
diagram
\[\begin{CD}
\sG(L)@>(\partial_y)>> \sG_{-1}(k(y))\\
@Af^*AA @Ae f^*AA\\
\sG(K)@>\partial_x>> \sG_{-1}(k(x))
\end{CD}\]
commutes, where $e$ is the ramification index of $v_y$ relative to $v_x$.\\
b) If $f$ is finite surjective, the diagram
\[\begin{CD}
\sG(L)@>(\partial_y)>> \displaystyle\bigoplus_{y\in f^{-1}(x)} \sG_{-1}(k(y))\\
@Vf_*VV @Vf_*VV\\
\sG(K)@>\partial_x>> \sG_{-1}(k(x))
\end{CD}\]
commutes.\qed
\end{lemma}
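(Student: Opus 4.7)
The plan is to deduce both parts directly from the construction of \eqref{eq14a} recalled in \S \ref{s4.2}, although one could alternatively quote the corresponding axioms of a cycle module in the sense of Rost via \cite[Prop. 5.4.64]{deglise}. Since both statements are local on $X$ around $x$ and on $Y$ around $y$, I first replace $X$ and $Y$ by the spectra of the local rings $\sO_{X,x}$ and $\sO_{Y,y}$, reducing to the case of an inclusion of discrete valuation rings with residue field extension $k(x)\hookrightarrow k(y)$ of ramification index $e$ in part (a).

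For part (a), the isomorphism \eqref{eq14a} identifies $\partial_x$ with the connecting map of the localisation long exact sequence
\[\sG(X)\to \sG(K)\to H^1_x(X,\sG_\Zar)\to \cdots\]
and similarly for $y$. Flat pullback is compatible with localisation sequences, so the argument reduces to analysing how the identification $\sG_{-1}(k(x))\simeq H^1_x(X,\sG_\Zar)$ behaves under $f^*$. Voevodsky's proof of this identification in \cite[Lemma 4.36]{voepre} proceeds by choosing a local parameter at $x$ and reducing to the universal case of the origin in $\A^1_{k(x)}$; since $f^*$ sends a uniformiser $t_x$ to $u\cdot t_y^e$ for some unit $u\in \sO_{Y,y}^\times$, and since the pullback along the $e$-th power map $\A^1\to \A^1$ induces multiplication by $e$ on $\sG_{-1}$, the factor $e$ emerges naturally.

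For part (b), the pushforward $f_*:\sG(L)\to \sG(K)$ is induced by the transfer structure on the presheaf with transfers $\sG$, and since $f$ is finite, $f_*$ on Zariski sheaves is exact. One therefore obtains a morphism between the two localisation sequences (on $Y$ and on $X$) whose effect on $H^1$ at $x$ is the map $\bigoplus_{y\in f^{-1}(x)}\sG_{-1}(k(y))\to \sG_{-1}(k(x))$ via \eqref{eq14a}; the desired commutativity is then the naturality of the connecting homomorphism with respect to this morphism of complexes.

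The main obstacle is the bookkeeping of the ramification index in (a). Once uniformisers at $x$ and $y$ are fixed, this boils down to the elementary fact that pullback along $t\mapsto t^e$ acts as multiplication by $e$ on $\sG_{-1}$, which is precisely where the hypothesis of homotopy invariance of $\sG$ enters, through \cite[Lemma 4.36]{voepre}. Part (b), by contrast, is a formal consequence of the exactness of the finite direct image together with the naturality of the connecting map.
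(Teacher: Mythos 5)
Your proposal is correct and follows essentially the same route as the paper, which offers no written proof at all: it merely asserts that the lemma ``follows from the construction of the isomorphisms \eqref{eq14a}'' and points to the cycle-module formalism of \cite[Prop.~5.4.64]{deglise} --- precisely the two options you identify, with your unwinding of the uniformiser bookkeeping for the factor $e$ in (a) and the naturality of the connecting map under finite pushforward in (b) being a reasonable expansion of what the paper leaves implicit.
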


\begin{prop}\label{p2} Let $\sG\in \HI_\Nis$. There is a canonical isomorphism
\[\sG_{-1} = \uHom(\G_m,\sG).\]
\end{prop}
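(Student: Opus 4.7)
The plan is to unwind both sides as presheaves in $U\in Sm/k$ and match them using the splitting of $L(\G_m)$ induced by the unit section. First, using homotopy invariance of $\sG$, I would rewrite
\[\sG_{-1}(U)=\Coker\bigl(p^*\colon \sG(U)\to \sG(U\times\G_m)\bigr),\]
where $p\colon U\times\G_m\to U$ is the projection. The unit section $e\colon\Spec k\to\G_m$ provides a retraction $e^*$ with $e^*\circ p^*=\mathrm{id}$, so this cokernel is canonically isomorphic to $\ker(e^*\colon \sG(U\times\G_m)\to\sG(U))$.

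Second, I would lift this splitting to $\PST$. The morphisms induced by $e$ and by the structural map $\G_m\to\Spec k$ yield a direct sum decomposition $L(\G_m)=\Z\oplus \tilde L(\G_m)$, where $\tilde L(\G_m)$ is the kernel of $L(\G_m)\to\Z$. Tensoring with $L(U)$ and applying $\Hom_{\PST}(-,\sG)$ gives
\[\sG(U\times\G_m)=\sG(U)\oplus \uHom_{\PST}(\tilde L(\G_m),\sG)(U),\]
with $p^*$ the inclusion of the first summand; hence $\sG_{-1}(U)\simeq \uHom_{\PST}(\tilde L(\G_m),\sG)(U)$.

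It remains to identify $\tilde L(\G_m)$ with the sheaf $\G_m$. I would appeal to the standard computation $h_0^\Nis(L(\G_m))=\Z\oplus\G_m$, which gives $h_0^\Nis(\tilde L(\G_m))=\G_m$. Since $\sG$ is a homotopy invariant Nisnevich sheaf, the adjunctions in diagram \eqref{eq15} imply $\uHom_{\PST}(\tilde L(\G_m),\sG)\simeq \uHom_{\HI_\Nis}(\G_m,\sG)$. The main difficulty is precisely this last step: making the identification $h_0^\Nis(\tilde L(\G_m))\simeq \G_m$ explicit and verifying compatibility of internal Homs across $\PST$, $\NST$, and $\HI_\Nis$ when the target lies in $\HI_\Nis$. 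Both ingredients are formal but require careful bookkeeping of sheafification and the left adjoint $h_0^\Nis$.
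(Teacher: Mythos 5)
Your argument is correct, but it takes a genuinely different route from the paper's. The paper interprets the statement as representability of $\sH\mapsto\Hom_{\HI_\Nis}(\sH\otimes_{\HI_\Nis}\G_m,\sG)$ and works in $\DM_-^\eff$: it writes $\sG_{-1}=\Coker(\sG\to p_*p^*\sG)$, uses the vanishing of $R^ip_*p^*\sG$ for $i>0$ to identify $p_*p^*\sG[0]$ with $\uHom(M(\A^1-\{0\}),\sG[0])$, splits $M(\A^1-\{0\})$ via the Gysin triangle and the rational point $1$, invokes $\Z(1)[1]\simeq\G_m[0]$, and finally descends to $\HI_\Nis$ using the right exactness of $\otimes$ (Lemma \ref{l2}). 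You instead stay entirely at the level of presheaves: the splitting $L(\G_m)=\Z\oplus\tilde L(\G_m)$ given by the unit section identifies $\sG_{-1}$ with $\uHom_{\PST}(\tilde L(\G_m),\sG)$, and the adjunctions of \eqref{eq15} together with $h_0^\Nis(\tilde L(\G_m)_\Nis)\simeq\G_m$ give the representability. Your route is more elementary --- no derived categories, no Gysin triangle, no vanishing of higher direct images, and no appeal to Lemma \ref{l2}; the only nontrivial input is $h_0^\Nis(L(\G_m))\simeq\Z\oplus\G_m$, which is the $H^0$ part of \cite[Cor. 3.4.3]{voetri} (or follows from the relative Picard description of $h_0$ of a curve). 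The two points you flag do close: $\uHom_{\PST}(\tilde L(\G_m),\sG)$ lies in $\HI_\Nis$ because it is $\Ker\bigl(e^*\colon\sG(-\times\G_m)\to\sG(-)\bigr)$, a kernel of a map of homotopy invariant Nisnevich sheaves with transfers; and for $\sH\in\HI_\Nis$ one has $\Hom_{\PST}(\sH\otimes_{\PST}\tilde L(\G_m),\sG)\simeq\Hom_{\HI_\Nis}(\sH\otimes_{\HI_\Nis}\G_m,\sG)$ by the adjunction and the monoidality of the composite left adjoint $\PST\to\HI_\Nis$ in \eqref{eq15}. What the paper's heavier proof buys, and yours does not, is Remark \ref{r1}: the derived-level isomorphism $\uHom(\G_m[0],\sG[0])\simeq\sG_{-1}[0]$ in $\DM_-^\eff$ and hence the $t$-exactness of $\uHom(\G_m[0],-)$, which are used later in Proposition \ref{p4} and Theorem \ref{t1}; your argument only produces the $H^0$ of that statement.
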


\begin{proof} This may not be the most economic proof, but it is quite short. The statement
means that $\sG_{-1}$ represents the functor
\[\sH\mapsto \Hom_{\HI_\Nis}(\sH\otimes_{\HI_\Nis}\G_m,\sG).\]

By \cite[Lemma 4.35]{voepre}, we have
\[\sG_{-1} = \Coker(\sG\to p_*p^*\sG)\]
where $p:\A^1-\{0\}\to \Spec k$ is the structural morphism and $p_*, p^*$ are computed with
respect to the Zariski topology. By \cite[Th. 5.7]{voepre}, we may replace the Zariski topology
by the Nisnevich topology. Moreover, by \cite[Prop. 5.4 and Prop. 4.20]{voepre}, we have
$R^ip_*p^*\sG = 0$ for $i>0$, hence $p_*p^*\sG[0]\iso Rp_*p^*\sG[0]$.

By \cite[Prop. 3.2.8]{voetri}, we have
\[Rp_*p^* \sG[0] = \uHom(M(\A^1-\{0\}),\sG[0])\]
where $\uHom$ is the (partially defined) internal Hom of $\DM_-^\eff$. By \cite[Prop.
3.5.4]{voetri} (Gysin triangle) and homotopy invariance, we have an exact triangle, split by
any rational point of $\A^1-\{0\}$:
\[\Z(1)[1]\to M(\A^1-\{0\})\to \Z\by{+1}\]

To get a canonical splitting, we may choose the rational point $1\in \A^1-\{0\}$.

By \cite[Cor. 3.4.3]{voetri}, we have an isomorphism $\Z(1)[1]\simeq \G_m[0]$. Hence, in
$\DM_-^\eff$, we have an isomorphism
\[\sG_{-1}[0]\simeq \uHom(\G_m[0],\sG[0]).\]

Let $\sH\in \HI_\Nis$. We get:
\begin{multline*}
\Hom_{\DM_-^\eff}(\sH[0],\sG_{-1}[0]) \simeq \Hom_{\DM_-^\eff}(\sH[0]\otimes \G_m[0],\sG[0])\\
\simeq \Hom_{\HI_\Nis}(H^0(\sH[0]\otimes \G_m[0]),\sG)=:\Hom_{\HI_\Nis}(\sH\otimes_{\HI_\Nis}
\G_m,\sG)
\end{multline*}
as desired (see \eqref{eq10}). For the second isomorphism, we have used the right exactness of
$\otimes$ (Lemma
\ref{l2}).
\end{proof}

\begin{rk}\label{r1} The proof of Proposition \ref{p2} also shows that, in $\DM_-^\eff$, we
have an isomorphism
\[\uHom(\G_m[0],\sG[0])\simeq \uHom(\G_m,\sG)[0]\]
where the left $\uHom$ is computed in $\DM_-^\eff$ and the right $\uHom$  is computed in
$\HI_\Nis$. In particular, $\uHom(\G_m[0],-):\DM_-^\eff\to \DM_-^\eff$ is $t$-exact.
\end{rk}

\begin{prop}\label{p4} Let $C$ be a smooth, proper, connected curve over $k$,
with function field $K$. There exists a canonical homomorphism
\[\Tr_{C/k}:H^1_\Zar(C,\sG)\to \sG_{-1}(k)\]
such that, for any $x\in C$, the composition
\[\sG_{-1}(k(x))\simeq H^1_x(C,\sG)\to H^1_\Zar(C,\sG)\by{\Tr_C} \sG_{-1}(k)\]
equals the transfer map $\Tr_{k(x)/k}$ associated to the finite surjective morphism $\Spec
k(x)\to\Spec k$.
\end{prop}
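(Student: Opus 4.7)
The plan is to construct $\Tr_{C/k}$ via the Gersten (Cousin) resolution of the homotopy invariant Nisnevich sheaf with transfers $\sG$ on the smooth curve $C$. Following \cite{voepre}, one has a flasque resolution
\[
0 \to \sG|_C \to (j_\eta)_* \sG(K) \to \bigoplus_{x \in C^{(1)}} (i_x)_* \sG_{-1}(k(x)) \to 0,
\]
whose differential is the family of boundary maps $\partial_x$ of \eqref{eq14}. Taking Zariski sections gives
\[
H^1_\Zar(C, \sG) \simeq \Coker\Bigl(\sG(K) \xrightarrow{(\partial_x)_x} \bigoplus_{x \in C^{(1)}} \sG_{-1}(k(x))\Bigr).
\]
Since $\sG_{-1}$ is itself a presheaf with transfers, the natural candidate for $\Tr_{C/k}$ is the one induced on the right-hand side by $(\alpha_x)_x \mapsto \sum_x \Tr_{k(x)/k}(\alpha_x) \in \sG_{-1}(k)$; with this definition, the compatibility with the local maps $H^1_x(C, \sG) \simeq \sG_{-1}(k(x)) \to H^1_\Zar(C, \sG)$ stated in the proposition is tautological.

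The key point is then to establish the \emph{reciprocity law}: that $\sum_x \Tr_{k(x)/k}(\partial_x \beta) = 0$ for every $\beta \in \sG(K)$. The strategy is to reduce to the case $C = \P^1_k$ by choosing a finite surjective morphism $f : C \to \P^1_k$ (which always exists). Lemma \ref{l4}(b) together with transitivity of transfers yields
\[
\sum_{y \in (\P^1_k)^{(1)}} \Tr_{k(y)/k}\bigl(\partial_y(f_* \beta)\bigr) = \sum_{x \in C^{(1)}} \Tr_{k(x)/k}(\partial_x \beta),
\]
so reciprocity on $C$ for $\beta$ follows from reciprocity on $\P^1_k$ applied to $f_* \beta \in \sG(k(t))$.

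For $\P^1_k$, a Mayer--Vietoris argument for the cover $\P^1 = \A^1 \cup (\P^1 \setminus \{0\})$, using the vanishing $H^i_\Nis(\A^1, \sG) = 0$ for $i > 0$ (homotopy invariance) together with the definition of $\sG_{-1}$, identifies $H^1_\Zar(\P^1, \sG) \simeq \sG(\G_m)/\sG(k) = \sG_{-1}(k)$, consistent with the motivic computation coming from $M(\P^1) \simeq \Z \oplus \Z(1)[2]$ and Remark \ref{r1}. It remains to identify the Cousin-induced map $\bigoplus_{y} \sG_{-1}(k(y)) \to H^1_\Zar(\P^1,\sG) \simeq \sG_{-1}(k)$ with $\sum_y \Tr_{k(y)/k}$. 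For a rational point $y$, the Gysin sequence combined with $\P^1 \setminus \{y\} \simeq \A^1$ forces the map $\sG_{-1}(k) = \sG_{-1}(k(y)) \to H^1(\P^1, \sG) \simeq \sG_{-1}(k)$ to be the identity. For a closed point $y$ of positive residue degree, one pulls back along the finite morphism $\phi : \P^1 \to \P^1$, $t \mapsto p(t)$, where $p$ is the (separable) minimal polynomial of $y$, so that $\phi^{-1}(0) = \{y\}$; Lemma \ref{l4}(b) then reduces the claim for $y$ to the rational case. The main obstacle is this last step, which requires careful bookkeeping of signs and verifying that $\phi_*$ acts as the identity on $H^1(\P^1, \sG) \simeq \sG_{-1}(k)$ (for $\sG = \G_m$ this recovers the classical fact that principal divisors on $\P^1$ have degree zero).
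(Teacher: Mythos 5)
Your route is genuinely different from the paper's. The paper defines $\Tr_{C/k}$ abstractly, as composition with the morphism $\G_m[1]\simeq\Z(1)[2]\to M(C)$ Poincar\'e-dual to the structural map $M(C)\to\Z$, and then checks the local compatibility by identifying the Gysin map $g_x:M(C)\to M^x(C)\simeq\Z_{k(x)}(1)[2]$ with the transpose of the inclusion of the point; reciprocity (Proposition \ref{p3}) then falls out as a corollary. You instead define $\Tr_{C/k}$ on the Cousin presentation $H^1_\Zar(C,\sG)\simeq\Coker\bigl(\sG(K)\to\bigoplus_x\sG_{-1}(k(x))\bigr)$ as the map induced by $\sum_x\Tr_{k(x)/k}$; this makes the local compatibility tautological, but it forces you to prove the reciprocity law \emph{first}, since it is exactly the well-definedness of your map on the cokernel. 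Your reduction of reciprocity to $C=\P^1$ via a finite morphism $f:C\to\P^1$, Lemma \ref{l4}~b) and transitivity of transfers on the presheaf with transfers $\sG_{-1}$ is correct and standard (it is the classical Rosenlicht--Serre reduction).

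The genuine gap is in the $\P^1$ step, and you have located it yourself but underestimated it. For a closed point $y$ of positive residue degree you must identify the composite $\sG_{-1}(k(y))\simeq H^1_y(\P^1,\sG)\to H^1(\P^1,\sG)\simeq\sG_{-1}(k)$ with $\Tr_{k(y)/k}$, and your reduction via $\phi:t\mapsto p(t)$ needs two inputs. First, that $\phi_*:H^1_y(\P^1,\sG)\to H^1_0(\P^1,\sG)$ is $\Tr_{k(y)/k}$ under the purity isomorphisms \eqref{eq14a}; this does follow from Lemma \ref{l4}~b) together with the surjectivity of $\partial_y:\sG(K)\to\sG_{-1}(k(y))$, but must be said. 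Second, and this is the crux, that $\phi_*$ acts as the identity on $H^1(\P^1,\sG)\simeq\sG_{-1}(k)$. This is true but it is not sign bookkeeping: $\phi_*$ is precomposition with the finite correspondence ${}^t\Gamma_\phi$, and the assertion is that ${}^t\Gamma_\phi$ acts by $1$ on the summand $\Z(1)[2]$ of $M(\P^1)$. A proof goes through $\operatorname{End}(M(\P^1))\simeq\Z\times\Z$, the identity $\Gamma_\phi\circ{}^t\Gamma_\phi=(\deg p)\,\Delta$ of finite correspondences, and the fact that $M(\phi)$ acts by $\deg p$ on $\Z(1)[2]$ (because $\phi^*$ is multiplication by $\deg p$ on $\Pic(\P^1)=\Hom_{\DM_-^\eff}(M(\P^1),\Z(1)[2])$), whence ${}^t\Gamma_\phi$ acts by $((\deg p),1)$. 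That is a computation with the decomposition $M(\P^1)=\Z\oplus\Z(1)[2]$ in $\DM_-^\eff$ of exactly the same nature as, and no shorter than, the paper's Gysin-triangle argument; as it stands your proposal leaves precisely this non-trivial content unproved. Once these two points are supplied, your argument does close up and gives a legitimate, more classical alternative to the paper's proof.
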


\begin{proof} By  
\cite[Prop. 3.2.7]{voetri}, we have
\[H^1_\Zar(C,\sG)\iso H^1_\Nis(C,\sG)\simeq
\Hom_{\DM_-^\eff}(M(C),\sG[1]).\] 

The structural morphism $C\to\Spec k$ yields a morphism of motives $M(C)\to \Z$ which, by
Poincar\'e duality, yields a canonical morphism
\[\G_m[1]\simeq \Z(1)[2]\to M(C).\]

(One may view this morphism as the image of the canonical morphism $\L\to h(C)$ in the category
of Chow motives.)

Therefore, by Proposition \ref{p2} and Remark \ref{r1}, we get a map
\[\Tr_{C/k}:H^1_\Zar(X,\sG)\to\Hom_{\DM_-^\eff}(\G_m[1],\sG[1])=\sG_{-1}(k).\]

It remains to prove the claimed compatibility. Let $M^x(C)$ be the motive of $C$ with supports
in $x$, defined as $C_*(\Coker(L(C-\{x\})\to L(C))$. Let $\Z_{k(x)}=M(\Spec k(x))$. By
\cite[proof of Prop. 3.5.4]{voetri}, we have an isomorphism $M^x(C)\simeq \Z_{k(x)}(1)[2]$, and
we have to show that the composition
\[
\Z(1)[2]\to M(C)\by{g_x} \Z_{k(x)}(1)[2]
\]
is $\Tr_{k(x)/k}$, up to twisting and shifting.  To see this, we observe that $g_x$ is the image of the morphism of Chow motives
\[h(C)\to h(\Spec k(x))(1)\]
dual to the morphism $h(\Spec k(x))\to h(C)$ induced by the inclusion $\Spec k(x)\to C$: this
is easy to check from the definition of $g_x$ in \cite{voetri} (observe that in this special
case, $Bl_x(C)=C$ and that we may use a variant of the said construction replacing $C\times
\A^1$ by $C\times\P^1$ to stay within smooth projective varieties). The conclusion now follows
from the fact that the composition 
\[\Spec k(x)\to C\to \Spec k\]
is the structural morphism of $\Spec k(x)$.
\end{proof}

\begin{prop}[Reciprocity] \label{p3} Let $C$ be a smooth, proper, connected curve over $k$, with
function field $K$. Then the sequence
\[\begin{CD}
\sG(K)@>{(\partial_x)}>> \bigoplus_{x\in C} \sG_{-1}(k(x))@>\sum_x \Tr_{k(x)/k}>>
\sG_{-1}(k)
\end{CD}\]
is a complex.
\end{prop}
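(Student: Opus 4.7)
The plan is to reduce the reciprocity to the vanishing of a composition in the localization long exact sequence for Zariski cohomology with supports on $C$, combined with the factorisation of local transfers provided by Proposition \ref{p4}.

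First I would use Proposition \ref{p4} to rewrite, for each closed point $x\in C$, the transfer $\Tr_{k(x)/k}$ as the composition
\[\sG_{-1}(k(x))\simeq H^1_x(C,\sG)\by{\iota_x} H^1_\Zar(C,\sG)\by{\Tr_{C/k}} \sG_{-1}(k),\]
with $\iota_x$ the canonical forget-supports map. Summing over all closed points, the composition appearing in the statement becomes $\Tr_{C/k}\circ\alpha\circ(\partial_x)$, where $\alpha=\sum_x \iota_x:\bigoplus_{x\in C^{(1)}} H^1_x(C,\sG)\to H^1_\Zar(C,\sG)$. It therefore suffices to prove that $\alpha\circ(\partial_x)=0$.

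For this I would invoke the standard localization long exact sequence for Zariski cohomology with supports: for any finite closed subset $Z\subset C^{(1)}$ with open complement $U\subset C$, the composition
\[\sG(U)\to H^1_Z(C,\sG)\to H^1_\Zar(C,\sG)\]
vanishes. Passing to the filtered colimit over increasing $Z$, the middle term becomes $\bigoplus_{x\in C^{(1)}} H^1_x(C,\sG)$ and $\sG(U)$ becomes $\sG(K)$, giving a composition $\sG(K)\to\bigoplus_x H^1_x(C,\sG)\by{\alpha} H^1_\Zar(C,\sG)$ that is still zero. By the very construction of $\partial_x$ in \S \ref{s4.2}, this colimit boundary is exactly $(\partial_x)$ under the identification \eqref{eq14a}, which yields the required vanishing.

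The only genuinely non-formal point is checking that the colimit boundary above agrees with $(\partial_x)$; this amounts to an unpacking of the isomorphism \eqref{eq14a} of \cite[Lemma 4.36]{voepre}. Once this compatibility is in hand the reciprocity law drops out of the exactness of the localization sequence combined with the factorisation built into Proposition \ref{p4}.
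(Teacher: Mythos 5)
Your proposal is correct and follows essentially the same route as the paper: factor each $\Tr_{k(x)/k}$ through $H^1_\Zar(C,\sG)$ via Proposition \ref{p4}, then observe that the composition $\sG(K)\to\bigoplus_{x}H^1_x(C,\sG)\to H^1(C,\sG)$ vanishes by the localization sequence. You merely spell out the colimit-over-supports argument and the compatibility with \eqref{eq14a} that the paper leaves implicit.
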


\begin{proof} This follows from Proposition \ref{p4}, since the composition
\[\sG(K)\to \bigoplus_{x\in C} H^1_x(C,\sG)\by{g_x} H^1(C,\sG)\]
is $0$.
\end{proof}

\begin{para}\label{s4.4}
If $\sF,\sG$ are presheaves with transfers, there is a bilinear morphism of presheaves with
transfers (i.e. a natural transformation over $\PST\times \PST$):
\begin{multline*}
\sF(U)\otimes \sG_{-1}(V) =\\
\Coker\left(\sF(U)\otimes \sG(V\times\A^1)\to \sF(U)\otimes
\sG(V\times(\A^1-\{0\}))\right)\to\\
 \Coker\left((\sF\otimes_{\PST} \sG)(U\times V\times \A^1)\to (\sF\otimes_{\PST}
\sG)(U\times V\times(\A^1-\{0\}))\right)\\
= (\sF\otimes_{\PST}\sG)_{-1}(U\times V)
\end{multline*}
which induces a morphism
\begin{equation}\label{eq16}
\sF\otimes_{\PST}\sG_{-1}\to (\sF\otimes_{\PST}\sG)_{-1}.
\end{equation}

In particular, for $\sG=\G_m$, we get a morphism $\sF\to (\sF\otimes_{\PST} \G_m)_{-1}$.
\end{para}

\begin{thm}\label{t1} Suppose $\sF\in \HI_\Nis$. Then\\
a) The composition 
\[\sF\to (\sF\otimes_{\PST} \G_m)_{-1}\to (\sF\otimes_{\HI_\Nis} \G_m)_{-1}\]
is the unit map of the adjunction between $-\otimes_{\HI_\Nis} \G_m$ and $(-)_{-1}$
stemming from Proposition \ref{p2}.\\
b) This composition is
an isomorphism.
\end{thm}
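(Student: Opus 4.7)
The plan is to split the proof into two parts, each using a different tool: for (a), the universal property defining the adjunction of Proposition \ref{p2}; for (b), Voevodsky's cancellation theorem. Part (a) is essentially a formal verification, and it is the bridge that turns (b) into a computation in $\DM_-^\eff$.

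For (a), the unit $\eta\colon \sF \to (\sF \otimes_{\HI_\Nis} \G_m)_{-1}$ of the adjunction from Proposition \ref{p2} is characterized by sending, under the adjunction bijection
\[\Hom_{\HI_\Nis}(\sF,\, (\sF \otimes_{\HI_\Nis} \G_m)_{-1}) \iso \Hom_{\HI_\Nis}(\sF \otimes_{\HI_\Nis} \G_m,\, \sF \otimes_{\HI_\Nis} \G_m),\]
the identity map to itself. I would verify that the composition in the statement satisfies the same universal property. This means unwinding the chain of identifications produced in Proposition \ref{p2}: the iso $\sG_{-1} \simeq \uHom(\G_m, \sG)$ came from $\sG_{-1}[0] \simeq \uHom(M(\A^1 - \{0\}), \sG[0])$ composed with the splitting of the Gysin triangle by the rational point $1 \in \A^1 - \{0\}$, together with $\Z(1)[1] \simeq \G_m[0]$. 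Both $\eta$ and the composition $\sF \to (\sF \otimes_{\PST} \G_m)_{-1} \to (\sF \otimes_{\HI_\Nis} \G_m)_{-1}$ are manifestly built from the tautological class of the identity in $\G_m(\A^1 - \{0\})$, which represents the generator $1 \in \Z = \G_{m,-1}(k)$; matching them is then a diagram chase.

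For (b), by part (a) the composition is, in $\DM_-^\eff$, the $H^0$ of the unit
\[\sF[0] \to \uHom(\G_m[0],\, \sF[0] \otimes \G_m[0])\]
of the adjunction $-\otimes\G_m[0] \dashv \uHom(\G_m[0],-)$. By Voevodsky's cancellation theorem, the functor $-\otimes\G_m[0] = -\otimes \Z(1)[1]$ is fully faithful on $\DM_-^\eff$, so this unit is an isomorphism. Taking $H^0$, using Lemma \ref{l2} (so $\sF[0] \otimes \G_m[0]$ lies in degrees $\le 0$) together with the $t$-exactness of $\uHom(\G_m[0],-)$ from Remark \ref{r1} to commute $H^0$ past this functor, yields the claimed iso $\sF \iso (\sF \otimes_{\HI_\Nis} \G_m)_{-1}$.

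The main obstacle is part (b): it rests essentially on Voevodsky's cancellation theorem, a deep result outside the machinery developed in this note. Part (a), though requiring some bookkeeping to trace back through the Gysin splitting and the formula \eqref{eq16}, is formal once conventions are fixed.
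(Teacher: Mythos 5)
Your proposal is correct and follows essentially the same route as the paper: part (a) is the "easy bookkeeping" of matching the composition with the adjunction unit from Proposition \ref{p2}, and part (b) lifts that unit to the morphism $\sF[0]\to\uHom(\Z(1),\sF[0](1))$ in $\DM_-^\eff$, invokes Voevodsky's cancellation theorem, and descends via $H^0$ using Remark \ref{r1}. The only difference is that you spell out the bookkeeping in (a) and the $t$-structure justifications in (b) in more detail than the paper does.
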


\begin{proof} a) is an easy bookkeeping. For b),  we compute again in $\DM_-^\eff$. By
Proposition
\ref{p2}, we are considering the  morphism in $\HI_\Nis$
\begin{equation}\label{eq13}
\sF\to \uHom(\G_m,\sF\otimes_{\HI_\Nis} \G_m).
\end{equation}

Consider the corresponding morphism in $\DM_-^\eff$
\[\sF[0]\to \uHom(\G_m[0],\sF[0]\otimes \G_m[0]).\]

As recalled in the proof of Proposition \ref{p2}, we have $\G_m[0]=\Z(1)[1]$, hence the above
morphism amounts to
\[\sF[0]\to \uHom(\Z(1),\sF[0](1))\]
which is an isomorphism by the cancellation theorem \cite{voecan}. A fortiori, \eqref{eq13},
which is (by Remark \ref{r1}) the $H^0$ of this isomorphism, is an isomorphism.
 \end{proof}

\begin{nota}\label{n1} Let $\sF,\sG\in \HI_\Nis$ and
$\sH=\sF\otimes_{\HI_\Nis}
\sG$. Let
$X,K,x$ be as in \S \ref{s4.2}. For $(a,b)\in \sF(K)\times \sG(K)$,  we denote by
$a\cdot b$ the image of $a\otimes b$ in $\sH(K)$ by the map
\[\sF(K)\otimes \sG(K)\to \sH(K).\]
\end{nota}

\begin{prop}[cf. \protect{\cite[Prop. 5.5.27]{deglise}}]\label{l3} Let $\sF,\sG\in \HI_\Nis$,
and consider the morphism induced by \eqref{eq16}
\[\sF\otimes_{\HI_\Nis} \sG_{-1}\by{\Phi} (\sF\otimes_{\HI_\Nis} \sG)_{-1}.\]
 Let
$X,K,x$ be as in \S \ref{s4.2}. Then the diagram
\[\xymatrix{
\sF(\sO_{X,x})\otimes \sG(K)\ar[r]\ar[d]^{i_x^*\otimes \partial_x}&
(\sF\otimes_{\HI_\Nis}\sG)(K)\ar[dd]^{\partial_x}\\
\sF(k(x))\otimes \sG_{-1}(k(x))\ar[d]\\
(\sF\otimes_{\HI_\Nis} \sG_{-1})(k(x))\ar[r]^{\Phi}& (\sF\otimes_{\HI_\Nis} \sG)_{-1}(k(x))
}\]
commutes, where $i_x^*$ is induced by the reduction map $\sO_{X,x}\to k(x)$. In other words,
with Notation \ref{n1} we have the identity
\begin{equation}\label{eq17}
\partial_x(a\cdot b) = \Phi(i_x^*a\cdot \partial_x b)
\end{equation}
for $(a,b)\in \sF(\sO_{X,x})\times \sG(K)$.
\end{prop}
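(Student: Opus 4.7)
My plan is to verify \eqref{eq17} by reducing both sides to a local cohomology computation and then invoking the bilinearity of the underlying presheaf pairing \eqref{eq16}. First I would localize at $x$: since $\partial_x$ is natural with respect to Zariski neighborhoods of $x$ (cf.\ Lemma \ref{l4}a)), and all horizontal arrows in the diagram are natural in $X$, we may replace $X$ by $\Spec\sO_{X,x}$. Then $a\in\sF(X)$ is a global section, $K$ is the generic point (i.e.\ the complement of the unique closed point $x$), and the product $a\cdot b\in\sH(K)$ (with $\sH=\sF\otimes_{\HI_\Nis}\sG$) is just the image of $a|_K\otimes b$ under the multiplication map.

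Next, I would use \eqref{eq14a} to identify $\partial_x$ with the connecting homomorphism in the local cohomology sequence
\[\sH(X)\to \sH(K)\to H^1_x(X,\sH_\Zar)\to H^1(X,\sH_\Zar).\]
Because $a$ extends over $X$, the bilinear pairing at the presheaf level $\sF\otimes\sG\to \sF\otimes_{\PST}\sG\to \sH$ induces a morphism of such local cohomology sequences from $\sG$ to $\sH$, given on $H^1_x$ by a cup-product style pairing
\[\sF(X)\otimes H^1_x(X,\sG_\Zar)\to H^1_x(X,\sH_\Zar)\]
which factors through $\sF(k(x))$ on the left factor (since $H^1_x$ of the constant sheaf vanishes and only the value of $a$ at $x$ survives). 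A routine diagram chase in the local cohomology sequence then yields
\[\partial_x(a\cdot b)=(i_x^*a)\cdot(\partial_x b)\]
in $H^1_x(X,\sH_\Zar)\simeq (\sF\otimes_{\HI_\Nis}\sG)_{-1}(k(x))$, where the product on the right is the one defined by cup product on local cohomology.

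Finally, I would identify this cup-product pairing with $\Phi$. Both pairings originate from the same underlying bilinear map $\sF\otimes\sG\to \sF\otimes_{\PST}\sG$ of presheaves, and the isomorphism \eqref{eq14a} is natural in $\sG\in\HI_\Nis$; combined with the construction of $\Phi$ by sheafification of \eqref{eq16}, the two pairings coincide. The main obstacle will be precisely this last step: carefully checking that Voevodsky's identification in \cite[Lemma 4.36]{voepre} is compatible with the $\PST$-bilinear map \eqref{eq16}. This amounts to revisiting the construction of \eqref{eq14a} as a connecting homomorphism and verifying that the standard cup product on the local cohomology long exact sequence matches the presheaf-level multiplication used to build \eqref{eq16}. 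Once this naturality is in hand, \eqref{eq17} follows from bilinearity.
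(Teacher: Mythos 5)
The paper does not actually prove this proposition: it is imported from D\'eglise's thesis (Prop.\ 5.5.27 of \cite{deglise}), so there is no internal proof to compare yours against. On its own terms, your outline has the right shape --- localise at $x$, identify $\partial_x$ with the connecting map of the local cohomology sequence via \eqref{eq14a}, and push the pairing through that sequence --- but the two places where you wave your hands are exactly where all the content of the statement lives, and one of the justifications you offer is not correct as stated.

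First, the claim that the pairing $\sF(\sO_{X,x})\otimes H^1_x(X,\sG_\Zar)\to H^1_x(X,\sH_\Zar)$ factors through $\sF(k(x))$ is the heart of the proposition, and the reason you give (``$H^1_x$ of the constant sheaf vanishes and only the value of $a$ at $x$ survives'') does not parse: $\sF$ is an arbitrary object of $\HI_\Nis$, there is no ``constant part'' of $a$ to split off, and no evident reason why $a\cup\xi$ should vanish when $i_x^*a=0$. The actual mechanism is the construction of \eqref{eq14a} in \cite[Lemma 4.36]{voepre}: after excision one deforms $H^1_x(X,\sG)$ to $H^1_{x\times\{0\}}(\A^1_{k(x)},\sG)\simeq\sG_{-1}(k(x))$, and it is only on this normal cone, using homotopy invariance, that the action of $a$ visibly becomes the action of $i_x^*a$ pulled back along $\A^1_{k(x)}\to\Spec k(x)$. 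Second, you defer the identification of your cup product with the map $\Phi$ induced by \eqref{eq16} to a final ``naturality check'' and yourself call it the main obstacle; but since \eqref{eq16} is given by an explicit presheaf-level formula while \eqref{eq14a} is built from the chain of reductions just described (Zariski versus Nisnevich, excision, the $\A^1$-deformation), this check must be carried through each step and, together with the previous point, it \emph{is} the proof. As written, your argument only establishes $\partial_x(a\cdot b)=a\cup\partial_x b$ for an unidentified pairing $\cup$ that a priori depends on $a$ itself and has not been shown to agree with $\Phi(i_x^*a\cdot{-})$.
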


\begin{cor}\label{c1} Let $\sF\in \HI_\Nis$; let $X,K,x$ be as in \S \ref{s4.2} and let
$(a,f)\in \sF(K)\times K^*$. Let $\sH=\sF\otimes_{\HI_\Nis}\G_m$, and consider the element
$a\cdot f\in\sH(K)$ as in Notation \ref{n1}.\\ 
a) Suppose that $a\in \IM(\sF(\sO_{X,x})\to \sF(K))$. Then
we have
\[\partial_x(a\cdot f) = v_x(f) \bar a\]
where $\partial_x$ is the map of \eqref{eq14} and $\bar a$ is the image of $a$ in $\sF(k(x))$.
Here we have used the isomorphism $\sH_{-1}\simeq \sF$ of Theorem \ref{t1}.\\
b) Suppose that $v_x(f- 1)>0$. Then $\partial_x(a\cdot f) = 0$.
\end{cor}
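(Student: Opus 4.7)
Both parts will follow from Proposition \ref{l3} applied with (variants of) $\sG = \G_m$, together with the identification $\sH_{-1}\simeq \sF$ furnished by Theorem \ref{t1} and the symmetry of $\otimes_{\HI_\Nis}$.

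For part a), I would specialise Proposition \ref{l3} to $\sG = \G_m$. This requires identifying $\G_{m,-1}$ with $\Z$: this identification is the special case $\sF=\Z$ of Theorem \ref{t1} (since $\Z\otimes_{\HI_\Nis}\G_m \simeq \G_m$), and under it the map $\partial_x:K^* \to \G_{m,-1}(k(x)) = \Z$ becomes the valuation $v_x$ (visible directly from the cokernel formula defining $\sG_{-1}$, or via the isomorphism $\G_{m,-1}(k(x))\simeq H^1_x(X,\G_m)$ of \eqref{eq14a} followed by the usual divisor sequence). Proposition \ref{l3} then yields
\[\partial_x(a\cdot f) \;=\; \Phi(i_x^* a \cdot \partial_x f) \;=\; \Phi(\bar a \cdot v_x(f)) \;=\; v_x(f)\,\Phi(\bar a).\]
The crucial point is that, for $\sG=\G_m$, the map $\Phi : \sF\otimes_{\HI_\Nis}\G_{m,-1} \to (\sF\otimes_{\HI_\Nis}\G_m)_{-1}$ becomes, after identifying the source with $\sF$, precisely the composition featured in Theorem \ref{t1}(a), namely the unit of the adjunction of Proposition \ref{p2}. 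By Theorem \ref{t1}(b) this $\Phi$ is the isomorphism $\sF\iso\sH_{-1}$ we are using to identify the target, so modulo this identification $\Phi(\bar a)=\bar a$ and the formula $v_x(f)\bar a$ drops out.

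For part b), the hypothesis $v_x(f-1)>0$ implies $f \in \G_m(\sO_{X,x})$ with reduction $\bar f = 1 \in \G_m(k(x))$. Since $a$ need not lie in $\sF(\sO_{X,x})$, Proposition \ref{l3} cannot be applied directly. Instead, I use the symmetry constraint of $\otimes_{\HI_\Nis}$ (recorded in \S \ref{s2.8}) to rewrite $a\cdot f = f\cdot a$ inside $\sH(K) = (\G_m\otimes_{\HI_\Nis}\sF)(K)$, and apply Proposition \ref{l3} with the roles of the two factors swapped, i.e.\ with ``$\sF$''${}=\G_m$ and ``$\sG$''${}=\sF$. This yields
\[\partial_x(a\cdot f) \;=\; \partial_x(f\cdot a) \;=\; \Phi'\bigl(\bar f \otimes \partial_x a\bigr) \;=\; \Phi'(1\otimes \partial_x a).\]
Since $1$ is the identity element of the abelian group $\G_m(k(x))$ it is zero there, so the natural pairing $\G_m(k(x))\otimes_\Z \sF_{-1}(k(x)) \to (\G_m\otimes_{\HI_\Nis}\sF_{-1})(k(x))$ kills $1\otimes \partial_x a$, and $\Phi'$ sends the result to $0$.

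The main obstacle I anticipate is the identification carried out in part a) of the map $\Phi$ of Proposition \ref{l3} (for $\sG=\G_m$) with the adjunction unit of Theorem \ref{t1}(a). This is the sort of ``easy bookkeeping'' alluded to in the proof of Theorem \ref{t1}(a) and should amount to chasing both maps back to the cokernel description of $(-)_{-1}$ and the definition of the bilinear morphism \eqref{eq16}.
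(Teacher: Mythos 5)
Your proposal is correct and follows the paper's proof exactly: part a) is Proposition \ref{l3} with $\sG=\G_m$ combined with Theorem \ref{t1} (which is precisely what identifies $\Phi$ with the isomorphism $\sF\iso\sH_{-1}$), and part b) is Proposition \ref{l3} with the roles of the two factors switched, using that $v_x(f-1)>0$ forces $i_x^*f=1$, the zero element of $\G_m(k(x))$. The paper's own proof is just these two sentences; your write-up supplies the bookkeeping it leaves implicit, and does so correctly.
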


\begin{proof} a) This follows from Proposition \ref{l3} (applied with $\sG=\G_m$) and Theorem
\ref{t1}. b) This follows again from Proposition \ref{l3}, after switching the r\^oles of $\sF$
and $\sG$.
\end{proof}

\begin{prop}\label{l5} Let $G$ be a semi-abelian variety and let $\sH=G\otimes_{\HI_\Nis}
\G_m$, so that $\sH_{-1} = G$ by Theorem \ref{t1}. Let $X,K,x$ be as in \S \ref{s4.2}, and let
$(g,f)\in G(K)\times K^*$. As in Corollary \ref{c1}, write $g\cdot f$ for the image of
$g\otimes f$ in $\sH(K)$. Then $\partial_x(g\cdot f)=\partial_x(g,f)$, where
$\partial_x(-,-)$ is Somekawa's  local symbol \cite[(1.1)]{somekawa} (generalising the
Rosenlicht-Serre local symbol).
\end{prop}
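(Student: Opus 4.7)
The plan is to invoke the characterisation of the Rosenlicht--Serre local symbol (which is, by definition, Somekawa's symbol for semi-abelian varieties) as the unique bilinear pairing $G(K) \times K^* \to G(k(x))$ satisfying: (i) $(g,f)_x = v_x(f)\bar g$ when $g$ extends to a section of $G$ over $\sO_{X,x}$; (ii) $(g,f)_x = 0$ when $v_x(f-1) > 0$; and (iii) Weil reciprocity $\sum_y \Tr_{k(y)/k}(g,f)_y = 0$ on any smooth projective curve with function field $K$. I would then check that $(g,f) \mapsto \partial_x(g \cdot f)$ satisfies all three axioms.

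Bilinearity is immediate: the assignment $g \otimes f \mapsto g \cdot f$ is bilinear by construction of $\sH = G\otimes_{\HI_\Nis}\G_m$ (Notation \ref{n1}), and $\partial_x\colon \sH(K) \to \sH_{-1}(k(x)) = G(k(x))$ is a group homomorphism, where the identification $\sH_{-1} \simeq G$ comes from Theorem \ref{t1}. Axiom (i) is precisely Corollary \ref{c1}(a) applied with $\sF = G$ -- regularity of $g$ at $x$ being equivalent to $g \in \IM(G(\sO_{X,x}) \to G(K))$ since $G$ is represented by a smooth $k$-scheme. Axiom (ii) is Corollary \ref{c1}(b) for $\sF = G$. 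Axiom (iii) is Proposition \ref{p3} applied to $\sG = \sH$, after again identifying $\sH_{-1}(k) = G(k)$ via Theorem \ref{t1}.

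The main obstacle is to match (i)--(iii) against Somekawa's axioms in \cite[(1.1)]{somekawa}. Since Somekawa adopts the Rosenlicht--Serre symbol as his definition, it suffices to recall from Serre's \emph{Groupes alg\'ebriques et corps de classes} that, for semi-abelian $G$, this symbol is uniquely characterised by bilinearity together with (i)--(iii): the modulus of any rational map $C \dashrightarrow G$ may be taken to be the reduced polar divisor, so Serre's continuity axiom specialises to (ii) and his value-on-regular axiom to (i), uniqueness then being enforced by reciprocity (iii). Granting this characterisation, the two pairings coincide; otherwise, one would need a separate ad hoc argument (e.g., a moving lemma using the short exact sequence $0 \to T \to G \to A \to 0$ of $G$ into its toric and abelian parts) to reduce non-regular $g$ to the regular case handled by Corollary \ref{c1}(a).
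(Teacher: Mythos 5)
Your proposal follows essentially the same route as the paper: verify that $(g,f)\mapsto\partial_x(g\cdot f)$ satisfies the defining axioms of the Rosenlicht--Serre local symbol --- bilinearity (obvious from the construction), the value on regular sections via Corollary \ref{c1} a), vanishing when $v_x(f-1)>0$ via Corollary \ref{c1} b), and reciprocity via Proposition \ref{p3} --- and then conclude by the uniqueness of the local symbol. The one step you are missing, and which is precisely what causes the hesitation in your last paragraph, is the initial reduction to an algebraically closed base field: Serre's existence and uniqueness theorem for the local symbol \cite[Ch.~III, Prop.~1]{gacl} is stated over $\bar k$ (where in particular the trace maps $\Tr_{k(y)/k}$ disappear from the reciprocity law), and Somekawa's symbol over a general perfect $k$ is pinned down by its behaviour after base change to $\bar k$. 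The paper handles this by first base-changing from $k$ to $\bar k$, using the compatibility of $\partial_x$ with pullback along dominant morphisms (Lemma \ref{l4} a), with ramification index $1$ here) together with the injectivity of $G(k(x))$ into the points over the corresponding extension of $\bar k$. Once this reduction is made, your axiom-checking goes through verbatim and no ``separate ad hoc argument'' (such as the d\'evissage $0\to T\to G\to A\to 0$ you suggest as a fallback) is needed; without it, your appeal to a uniqueness statement over a non-closed field with traces in the reciprocity axiom is not literally covered by the reference you cite.
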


\begin{proof} Up to base-changing from $k$ to $\bar k$ (see Lemma \ref{l4} a)), we may assume
$k$ algebraically closed. We then have to show that $\partial_x(g\cdot f)$ is the
Rosenlicht-Serre local symbol of
\cite[Ch. III, Def. 2]{gacl}. In this definition, Condition i) is obvious, Condition ii) is
Corollary \ref{c1} b), Condition iii) is Corollary \ref{c1} a) and Condition iv) is Proposition
\ref{p3}. The conclusion now follows from the uniqueness of the local symbol \cite[Ch. III,
Prop. 1]{gacl}.
\end{proof}

\section{The factorisation}

\begin{thm}\label{p1} The homomorphism \eqref{eq9} factors through So\-me\-ka\-wa's relations.
Consequently, we get a surjective homomorphism \eqref{eq1}.
\end{thm}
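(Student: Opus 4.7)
The strategy is to realise each Somekawa reciprocity relation as an instance of Proposition \ref{p3} applied to the sheaf
\[\sH := G_1\otimes_{\HI_\Nis}\cdots\otimes_{\HI_\Nis}G_n\otimes_{\HI_\Nis}\G_m,\]
which by associativity of $\otimes_{\HI_\Nis}$ and Theorem \ref{t1} satisfies $\sH_{-1}\simeq G_1\otimes_{\HI_\Nis}\cdots\otimes_{\HI_\Nis}G_n$. Recall that Somekawa's relations are generated by elements of the form
\[R(C,f,g_\bullet) := \sum_{x\in C_{(0)}}\Tr_{k(x)/k}\{g_1(x),\dots,\partial_x(g_{i(x)},f),\dots,g_n(x)\}\]
in $(G_1\oo^M\cdots\oo^M G_n)(k)$, where $C/k$ is a smooth proper connected curve with function field $K$, $f\in K^*$, $g_j\in G_j(K)$, for each closed point $x\in C$ there exists $i(x)$ with $g_j\in G_j(\sO_{C,x})$ for $j\neq i(x)$, and $\partial_x(-,-)$ is Somekawa's local symbol. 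It suffices to show that each such $R$ maps to zero under \eqref{eq9}.

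I would form $\alpha := g_1\cdots g_n\cdot f\in\sH(K)$ in the sense of Notation \ref{n1}, and apply reciprocity (Proposition \ref{p3}) to $\sH$ and $C$ to obtain
\[\sum_{x\in C_{(0)}}\Tr_{k(x)/k}\,\partial_x(\alpha) = 0 \quad\text{in}\quad (G_1\otimes_{\HI_\Nis}\cdots\otimes_{\HI_\Nis}G_n)(k).\]
The heart of the proof is the local identification, at each closed point $x$ with $i=i(x)$, of $\partial_x(\alpha)$ with
\[g_1(x)\cdots g_{i-1}(x)\cdot\partial_x(g_i,f)\cdot g_{i+1}(x)\cdots g_n(x) \in (G_1\otimes_{\HI_\Nis}\cdots\otimes_{\HI_\Nis}G_n)(k(x)).\]
To obtain this, I would bundle the regular factors $\{g_j\}_{j\neq i}$ into a single section $\gamma_x\in\sF(\sO_{C,x})$, where $\sF$ is the $\HI_\Nis$-tensor product of the $G_j$ for $j\neq i$; reorder via the symmetry of $\otimes_{\HI_\Nis}$; and apply Proposition \ref{l3} to the bipartite pair $(\gamma_x,\,g_i\cdot f)\in\sF(\sO_{C,x})\times\sG(K)$ with $\sG := G_i\otimes_{\HI_\Nis}\G_m$, which yields $\partial_x(\alpha)=\gamma_x(x)\cdot\partial_x(g_i\cdot f)$. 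Proposition \ref{l5} then identifies the factor $\partial_x(g_i\cdot f)$ with Somekawa's local symbol $\partial_x(g_i,f)\in G_i(k(x))$ via the isomorphism $\sG_{-1}\simeq G_i$ of Theorem \ref{t1}.

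Combining the local identification with reciprocity, and noting that \eqref{eq9} is the restriction to $k$-points of a morphism of presheaves with transfers (hence compatible with all transfer maps $\Tr_{k(x)/k}$), one concludes that the image of $R(C,f,g_\bullet)$ under \eqref{eq9} equals $\sum_x\Tr_{k(x)/k}\partial_x(\alpha)=0$. Hence \eqref{eq9} factors through $K(k;G_1,\dots,G_n)$; composing with the isomorphism of Lemma \ref{l1} yields \eqref{eq1}, whose surjectivity is inherited from that of \eqref{eq9} noted in \S \ref{mor1}. The principal technical obstacle is the identification $\partial_x(\alpha)=\gamma_x(x)\cdot\partial_x(g_i\cdot f)$: Proposition \ref{l3} handles only a bipartite split, so one must verify that the regular sections $g_j\in G_j(\sO_{C,x})$ assemble into a well-defined element $\gamma_x\in\sF(\sO_{C,x})$ of the tensor presheaf (immediate from the definition of $\otimes_{\PST}$ together with sheafification) and that the resulting formula is independent of the chosen bracketing, which follows from the associativity and symmetry of $\otimes_{\HI_\Nis}$.
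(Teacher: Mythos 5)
Your proposal is correct and follows essentially the same route as the paper: form $g_1\otimes\dots\otimes g_n\otimes\{h\}$ in $(G_1\otimes_{\HI_\Nis}\dots\otimes_{\HI_\Nis}G_n\otimes_{\HI_\Nis}\G_m)(K)$, identify its residue at each closed point with the Somekawa term via Proposition \ref{l3}/Corollary \ref{c1} and Proposition \ref{l5}, and conclude by reciprocity (Proposition \ref{p3}). Your treatment of the bipartite splitting needed for Proposition \ref{l3} is just a more explicit version of the step the paper dispatches by citing Corollary \ref{c1} a).
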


\begin{proof} Let $C/k$ be a smooth proper an connected curve. Let $K=k(C)$, and, for all
$i\in [1,n]$, let $g_i\in G_i(K)$. We also give ourselves a rational function $h\in K^*$. We
assume that, for any $c\in C$, there exists $i(c)$ such that $g_i\in G_i(\sO_{C,c})$ for all
$i\ne i(c)$. Let $\sF=G_1\otimes_{\HI_\Nis}\dots
\otimes_{\HI_\Nis}G_n$. We must show that the element
\[\sum_{c\in C} \Tr_{k(c)/k}(g_1(c)\otimes \dots \otimes
\partial_c(g_{i(c)},h)\otimes\dots\otimes g_n(c)) \] of $(G_1\oo^M\dots \oo^M G_n)(k)$ goes to
$0$ in $\sF(k)$ via \eqref{eq9}, where $\partial_c$ is Somekawa's local symbol.\footnote{As
was observed by W. Raskind, the signs appearing in
\protect\cite[(1.2.2)]{somekawa} should not be there.}

Consider the element $g=g_1\otimes\dots\otimes g_n\in \sF(K)$. It follows from Corollary
\ref{c1} a) and Proposition  \ref{l5} that, for any $c\in C$, we have
\begin{multline*}
g_1(c)\otimes \dots \otimes \partial_c(g_{i(c)},h)\otimes\dots\otimes g_n(c)=\\
g_1(c)\otimes \dots \otimes \partial_c(g_{i(c)}\otimes\{h\})\otimes\dots\otimes g_n(c)=
\partial_c(g\otimes \{h\}).
\end{multline*}

The claim now follows from Proposition \ref{p3}.
\end{proof}

\appendix

\section{Extending monoidal structures}\label{stens}

\begin{para} Let $\sA$ be an additive category. We write $\sA\Mod$ for the category of
contravariant additive functors from $\sA$ to abelian groups. This is a Grothendieck abelian
category. We have the additive Yoneda embedding
\[y_\sA:\sA\to \sA\Mod\]
sending an object to the corresponding representable functor.
\end{para}

\begin{para}\label{Aadj} Let $f:\sA\to \sB$ be an additive functor. We have an induced functor
$f^*:\sB\Mod\to \sA\Mod$ (``composition with $f$"). As in \cite[Exp. 1, Prop. 5.1 and
5.4]{sga4}, the functor
$f^*$ has a left adjoint
$f_!$ and a right adjoint $f_*$ and the diagram
\[\begin{CD}
\sA@>y_\sA>> \sA\Mod\\
@VfVV @Vf_!VV\\
\sB@>y_\sB>> \sB\Mod
\end{CD}\]
is naturally commutative.
\end{para}

\begin{para} If $f$ is fully faithful, then $f_!$ and $f_*$ are fully faithful and $f^*$ is a
localisation, as in \cite[Exp. 1, Prop. 5.6]{sga4}. 
\end{para}

\begin{para} Suppose that $f$ has a left adjoint $g$. Then we have natural isomorphisms
\[g^*\simeq f_!,\quad g_*\simeq f^*\]
as in \cite[Exp. 1, Prop. 5.5]{sga4}.
\end{para}

\begin{para} Suppose further that $f$ is fully faithful. Then $g^*\simeq f_!$ is fully faithful.
From the composition 
\[g^*g_*\Rightarrow Id_{\sA\Mod}\Rightarrow g^*g_!\]
of the unit with the counit, one then deduces a canonical morphism of functors
\[g_*\Rightarrow g_!.\]
\end{para}

\begin{para}\label{Atens} Let $\sA$ and $\sB$ be two additive categories. Their \emph{tensor
product} is the category $\sA\boxtimes \sB$ whose objects are finite collections $(A_i,B_i)$
with
$(A_i,B_i)\in \sA\times\sB$, and
\[(\sA\boxtimes\sB)((A_i,B_i),(C_j,D_j)) = \bigoplus_{i,j}\sA(A_i,C_j)\otimes \sB(B_i,D_j).\]

We have a ``cross-product" functor
\[\boxtimes:\sA\Mod\times \sB\Mod\to (\sA\boxtimes\sB)\Mod\]
given by
\[(M\boxtimes N)((A_i,B_i)) = \bigoplus_i M(A_i)\otimes N(B_i).\]
\end{para}

\begin{para}\label{A!} Let $\sA$ be provided with a biadditive bifunctor $\bullet:\sA\times
\sA\to
\sA$. We may view $\bullet$ as an additive functor $\sA\boxtimes \sA\to \sA$. We may then extend
$\bullet$ to $\sA\Mod$ by the composition
\[\sA\Mod\times \sA\Mod\by{\boxtimes}(\sA\boxtimes \sA)\Mod\by{\bullet_!}\sA\Mod.\]

This is an extension in the sense that the diagram
\[\begin{CD}
\sA\times \sA@>y_\sA\times y_\sA>> \sA\Mod\times \sA\Mod\\
@V\bullet\times\bullet VV @V\bullet VV\\
\sA@>y_\sA>> \sA\Mod 
\end{CD}\]
is naturally commutative.

If $\bullet$ is monoidal (resp. monoidal symmetric), then its associativity and commutativity
constraints canonically extend to $\sA\Mod$.
\end{para}

\begin{para}\label{Acoh} Let $\sA,\sB$ be two  additive symmetric monoidal categories, and let
$f:\sA\to
\sB$ be an additive symmetric monoidal functor. The above definition shows that the functor
$f_!:\sA\Mod\to \sB\Mod$ is also symmetric monoidal.
\end{para}

\begin{para} In \S\ref{A!}, let us write $\bullet_!= \int$ for clarity. Let $P\in (\sA\boxtimes
\sA)\Mod$. Then $\int P$ is the \emph{left Kan extension of $P$ along $\bullet$} in the sense
of \cite[X.3]{mcl}. This gives a formula for $\int P$ as a \emph{coend} (ibid., Th. X.4.1); for
$A\in\sA$:
\begin{equation}\label{eqA.1}
\int P(A) = \int^{(B,B')} \sA(A,B\bullet B')\otimes P(B,B').
\end{equation}

In particular:
\end{para}

\begin{prop} Suppose $\sA$ rigid. Then \eqref{eqA.1} simplifies as
\[\int P(A) = \int^B  P(B, A\bullet B^*)\]
where $B^*$ is the dual of $B\in \sA$. In particular, if $P=M\boxtimes N$ for $M,N\in \sA\Mod$,
we have for $A\in\sA$:
\begin{equation}\label{eqA.2}
(M\bullet N)(A)= \int^B  M(B)\otimes N(A\bullet B^*)
\end{equation}
which describes $M\bullet N$ as a ``convolution".
\end{prop}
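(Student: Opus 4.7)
The plan is to apply two standard coend manipulations to the formula \eqref{eqA.1} already established in the paper, using only (a) the rigidity isomorphism and (b) the co-Yoneda (density) lemma, connected by Fubini for coends.

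First I would use rigidity of $\sA$ to rewrite the hom set appearing in the integrand. Since every object of $\sA$ is dualisable and $\bullet$ is symmetric monoidal, there is a natural isomorphism
\[\sA(A, B\bullet B') \simeq \sA(A\bullet B^*, B'),\]
natural in all three variables. Substituting this into \eqref{eqA.1} turns the integrand into $\sA(A\bullet B^*, B')\otimes P(B,B')$, a functor of $(B,B')$ of the correct mixed variance to be coended.

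Next I would evaluate the $B'$-coend first, invoking Fubini for coends (which applies since all the coends live in the cocomplete abelian category of abelian groups). For fixed $B$, the partial functor $B'\mapsto P(B,B')$ is a presheaf on $\sA$, and the additive co-Yoneda lemma (density) gives
\[\int^{B'} \sA(X, B')\otimes P(B, B') \;\simeq\; P(B, X)\]
naturally in $X$. Specialising $X = A\bullet B^*$, the inner coend collapses to $P(B, A\bullet B^*)$, yielding
\[\int P(A) \;=\; \int^{B} P(B, A\bullet B^*),\]
which is the desired formula. Setting $P = M\boxtimes N$, so that $P(B,B') = M(B)\otimes N(B')$, then gives \eqref{eqA.2} as a direct specialisation.

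There is no real obstacle here: the argument is entirely formal and the two ingredients (the duality adjunction and the co-Yoneda lemma) are standard. The only step that requires minor care is recording the variances correctly — the coend $\int^{(B,B')}$ takes in a functor on $(\sA\boxtimes\sA)^{\mathrm{op}}\times(\sA\boxtimes\sA)$, and one needs to check that after applying the rigidity isomorphism the integrand is still of the shape that allows co-Yoneda in the $B'$-variable and Fubini to split the double coend. All of this is routine, so the proof can be presented in a few lines.
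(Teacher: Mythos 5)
Your argument is correct and is essentially the paper's own proof: the paper likewise applies rigidity to rewrite $\sA(A,B\bullet B')$ as $\sA(A\bullet B^*,B')$ inside \eqref{eqA.1} and then collapses the $B'$-variable (its phrase ``the variable $B'$ is dummy'' is exactly your Fubini-plus-co-Yoneda step, which you have merely made explicit). No gap; your version just spells out the justification the paper leaves implicit.
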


\begin{proof} Applying \eqref{eqA.1} and rigidity, we have
\begin{multline*}
\int P(A) = \int^{(B,B')} \sA(A,B\bullet B')\otimes P(B,B')\\
=\int^{(B,B')} \sA(A\bullet B^*, B')\otimes P(B,B')\\
=\int^B  P(B,A\bullet B^*)
\end{multline*}
because in the third formula, the variable $B'$ is dummy (this simplification is not in Mac
Lane!).
\end{proof}

\enlargethispage*{20pt}

\end{document}